\newcolumntype{H}{>{\setbox0=\hbox\bgroup}c<{\egroup}@{}}
\newcommand{\R}{{\mathbb R}}
\newcommand{\E}{{\mathbb E}}
\renewcommand{\P}{{\mathbb P}}
\newcommand{\N}{{\mathbb N}}
\newcommand{\eps}{{\varepsilon}}
\DeclareMathOperator{\Var}{Var}
\DeclareMathOperator{\trace}{trace}
\newtheorem{theorem}{Theorem}[section]
\newtheorem{lemma}[theorem]{Lemma}
\newtheorem{remark}[theorem]{Remark}
\newtheorem*{remark*}{Remark}
\newtheorem*{definition*}{Definition}
\newcounter{rcnt}[section]
\newcounter{desccount}
\newcommand{\descref}[1]{\hyperref[#1]{#1}}
\begin{document}
\sloppy

\title{Statistical inference with $F$-statistics
        when fitting simple models to high-dimensional data}

\author{
	Hannes Leeb (University of Vienna and DataScience@UniVienna)
	\\
	Lukas Steinberger (University of Freiburg)
	}

\maketitle

\begin{abstract}
We study linear subset regression in the context of the
high-dimensional overall model $y = \vartheta+\theta' z + \epsilon$
with univariate response
$y$ and a $d$-vector of random regressors $z$, independent of
$\epsilon$. Here,
‘high-dimensional’ means that the number $d$ of available explanatory
variables is much larger than the number $n$ of observations.
We consider simple linear sub-models where $y$ is regressed on a set
of $p$ regressors given by $x = M'z$, for some $d \times p$ matrix
$M$ of full rank $p < n$. The corresponding simple model, i.e.,
$y=\alpha+\beta' x + e$, can be
justified by imposing appropriate restrictions on the unknown
parameter $\theta$ in the overall model; otherwise,
this simple model can be
grossly misspecified. In this paper,
we establish asymptotic validity of the standard 
$F$-test on the surrogate
parameter $\beta$, in an appropriate sense,
even when the simple model is misspecified.
\end{abstract}

\section{Introduction}
\label{introduction}

The $F$-test is a staple tool of applied statistical analyses.
It is widely used, sometimes also in situations where its
applicability is debatable because underlying assumptions may not be met.
We study a situation of this kind: An $F$-test after fitting a
(possibly misspecified) working model.
We focus, in particular, on a scenario where
the fitted model has $p$ explanatory variables while
the true model has $d$ explanatory variables,
with $p \ll d$, and where sample
size is of the same order as $p$, i.e., $p = O(n)$.
Scenarios like this occur, for example, 
in quality control
studies like~\cite{Sou91a}, where a model with 18 explanatory variables
(out of a total of about 8,000) is fit based on a sample of size 50;
in time series forecasting with principal components as in \cite{Sto02a},
who extract a handful of factors from 149 explanatory variables based on
480 monthly observations;
or
in genetic analyses like~\cite{Vee02a}, who select and fit a model with 
70 genes (out of a total of about 25,000) based on a sample of size 78.
In situations like these, the question whether the fitted model has 
any explanatory value is of particular interest. We show that, approximately, 
the usual $F$-statistic is 
$F$-distributed under a corresponding null-hypothesis,
and that it is non-central $F$-distributed in a local neighborhood
of the null. Approximation errors go to zero as $n\to \infty$ if
$n^2 / \log d \to 0$ and if, at the same time, 
$p$ is of the same, or of slower, order as $n$; 
cf. Theorem~\ref{t1} and Remark~\ref{rateofp}, respectively.
Our results are uniform over a large region of the parameter
space that we consider. In particular, our results also cover situations
where the fitted model is misspecified.
The setting of our analysis is non-standard in that we 
require a particular constellation of $d$, $p$ and $n$.
This is a challenging setting of 
practical relevance, for which few theoretical results are
available so far.
Our findings, which are given for independent 
observations, also prompt the question whether similar 
results can be obtained under serial correlation.

The $F$-statistic is exactly $F$-distributed in a correctly specified
linear model with Gaussian errors; and it is asymptotically $F$-distributed
under the strong Gau{\ss}-Markov condition on the errors if $n\to\infty$ while
the model dimension stays fixed; cf.~\citet{And58a}.
$F$-tests  in correctly specified models in settings where $p$ is allowed 
to increase
with $n$ are studied, among others, by
\cite{Akr00a, Bat05a, Boo95a, Har08a, Por84a, Por85a,Wan13a}.
In addition, there are several viable alternatives to the $F$-test in
potentially misspecified settings;
see, for example, \citet{Che10b, Eic67a, Hub67a, Whi80a, Whi80b, Zho11a}.
For further results on hypothesis testing and marginal screening in 
misspecified models, see, for example,
\citet{Boo13a, Cho11a, Fom03a, Jen91a, Ram91a}, and the references therein.

On a technical level, this paper relies on~\citet{Wan13a}, the corresponding 
extensions and corrections in~\cite{Ste16a}, 
and also on~\citet{Ste18a, Ste18b};
all but the first of these references
are based on \citet{Ste15}.

The rest of the paper is structured as follows: 
In Section~\ref{thetruemodel}, we describe the true data-generating
model and the underlying parameter space. 
The (typically misspecified) working model and
the corresponding $F$-statistic are described in Section~\ref{theworkingmodel}.
Our main theoretical result is given in Section~\ref{mainresult},
and a simulation study in Section~\ref{numericalresults} demonstrates
that our asymptotic approximations can `kick-in' reasonably fast.

\section{The true model}
\label{thetruemodel}

Throughout, we consider the (true) linear model
\begin{equation}\label{y}
y \quad = \quad  \vartheta+\theta' z + \epsilon
\end{equation}
with $\vartheta\in \R$ and $\theta \in \R^d$ for some $d \in \N$.
We assume that the error $\epsilon$ is independent of $z$, with
mean zero and finite variance $\sigma^2>0$; its distribution will be denoted by 
$\mathcal L(\epsilon)$.
Moreover, we assume that the vector of regressors  $z$ has mean $\mu \in \R^d$
and positive definite variance/covariance matrix $\Sigma$.
Our model assumptions are further discussed in 
\citet[Remark 7.1]{Ste18a}.
No additional restrictions will be placed on
the regression coefficients $\vartheta$ and $\theta$, on the
moments $\mu$ and $\Sigma$, 
or on the error distribution $\mathcal L(\epsilon)$.

We do place some assumptions on the distribution of the explanatory 
variables. First, we assume  that $z$ can be written as
an affine transformation of independent random variables. With this, 
we can represent the $d$-vector $z$ as 
\begin{equation}\label{z}
z \quad=\quad \mu + \Sigma^{1/2} R \tilde{z}
\end{equation}
for a $d$-vector $\tilde{z}$ with independent (but not necessarily
identically distributed) components so that
$\E[\tilde{z}]=0$ and $\E[\tilde{z}\tilde{z}'] = I_d$,
where $\Sigma^{1/2}$ is the positive definite and symmetric square root 
of $\Sigma$, and where $R$ is an orthogonal (non-random) matrix.
Second, we assume that $\tilde{z}$ has a Lebesgue density, which we denote 
by
$f_{\tilde{z}}$, with bounded marginal densities and finite marginal 
moments of sufficiently high order. In particular, we will assume that
$f_{\tilde{z}}$ belongs to one of the classes ${\mathcal F}_{d,k}(D,E)$
that are defined in the next paragraph, for appropriate constants 
$k$, $D$ and $E$. Our assumptions on $z$ are similar to those
maintained by~\citet{Bai96a} and~\cite{Zho11a}.
For later use, note that the distribution of  $(y,z)$ in \eqref{y}--\eqref{z}
is characterized by $\vartheta$ and $\theta$,
by $\mathcal L(\epsilon)$, by $\Sigma$ and $\mu$, by $f_{\tilde{z}}$, and
by $R$.  

Fix an integer $k\geq 1$ and positive (finite) constants $D$ and $E$.
With this, write ${\mathcal F}_{d,k}(D,E)$ for the class of Lebesgue densities
on $\R^d$ that are products of univariate marginal densities such that
each such marginal density is bounded from above by $D$, and such that
each univariate marginal density has absolute moments of order up to $k$
that are bounded by $E$.

\section{The sub-model and the $F$-test}
\label{theworkingmodel}

Consider a sub-model where $y$ is regressed on $x$, with $x$ given by
\begin{equation} \label{x}
x \quad=\quad  M'z
\end{equation}
for some full-rank $d\times p$ matrix $M$ with $p<d$.
For example, $M$ can be a selection matrix that picks out $p$ components
of the $d$-vector $z$. 
Submodels with regressors of the form $x=M'z$ also occur in
principal component regression,
partial least squares, and certain sufficient dimension reduction
methods.
We are particularly interested in situations where $d$ 
is \emph{much} larger than $p$, i.e., $p \ll d$.
Trivially, we can write 
\begin{equation} \label{workingmodel}
	y = \alpha + \beta' x + e
\end{equation}
with $e = y - \alpha-\beta'x$, where $\alpha$ and $\beta$
minimize $\E[(y-\alpha-\beta' x)^2]$. 
The `error' $e$ has mean zero (because both \eqref{y} and
\eqref{workingmodel} include an intercept), and we denote
its variance by $s^2 = \E[e^2]$.
Note that
$\alpha= \vartheta+\mu'\theta - \mu' M (M' \Sigma M)^{-1} M' \Sigma \theta$
and, for later use, that
\begin{align} \label{betas2}
\begin{split}
\beta \quad & = \quad (M'\Sigma M)^{-1} M'\Sigma \theta\quad\text{and}\\
s^2 \quad   & = \quad \theta'\Sigma\theta +
		\theta'\Sigma M (M'\Sigma M)^{-1} M'\Sigma\theta+\sigma^2.
\end{split}
\end{align}
Irrespective of whether the working model is correctly specified,
the `surrogate' parameters $\alpha$, $\beta$ and $s^2$ are always 
well-defined. Here, $\beta$ is our main object
of interest, instead of the underlying true parameter $\theta$.
Such surrogate parameters are well-known in the statistics literature, 
certainly since \citet{Hub67a}, and have recently gained new popularity, as 
witnessed by, e.g., \citet{Aba14a, Bra14a, Bac15a, Buj14a}. In 
particular, such surrogate parameters can be consistently estimated, 
in a standard $M$-estimation setting, by the OLS estimator or by robust 
alternatives, provided that $p$ is not too large relative to $n$ 
\citep[see][]{Por84a, Por85a, Whi80a, Whi80b}; 
cf. also Lemma~A.3 in~\citet{Ste15} and Lemma~A.4 in~\citet{Ste18a}
for analyses tailored to our present setting.

The working model \eqref{workingmodel} is correct (in the usual sense)
if $\E[y\|z] = \E[y\|x]$, i.e., if
$\vartheta+\theta'z = \alpha + \beta'x$ or, equivalently, if $\epsilon=e$.
This is the case if $\theta$ lies in the column space of $M$;
if $M$ is a selection matrix, this means that $M'\theta$ selects all the 
non-zero components of $\theta$.
Here, we do not assume that the working model is correct.
In particular, we stress that
$e$ may differ from $\epsilon$,  and that $e$ may depend on $x$.

When working with the simple sub-model \eqref{workingmodel}, 
a natural question is whether $x$ has any explanatory value for the response 
variable $y$. 
Given a sample of $n > p+1$ independent and identically distributed (i.i.d.) 
observations of $y$ and $x$ 
from \eqref{workingmodel},
a classical approach to this question is to use the $F$-test
of the hypotheses
\begin{equation}\label{hypotheses}
H_0: \beta = 0 \quad\text{versus}\quad H_1: \beta\neq 0.
\end{equation}
Let $Y$ and $X$ denote the $n\times 1$ vector of responses 
and the $n\times p$ matrix of explanatory variables, respectively.
Write $\hat{\beta}$ for the OLS-estimator
for $\beta$ when $Y$ is regressed on $X$ and a constant, set
$\hat{s}^2 = \|(I_n - P_{\iota,X})Y\|^2/(n-p-1)$, and
write $\hat{F}_n=\hat{F}_n(X,Y)$ for the 
usual $F$-statistics for testing $H_0$, i.e.,
$\hat{F}_n= \|(I_n - P_\iota)X \hat{\beta}\|^2/ (p \hat{s}^2)$
if the numerator is well-defined and the denominator is positive
and $\hat{F}_n=0$ otherwise. Here, $P_{\dots}$ denotes the orthogonal
projection on the space spanned by the column-vectors 
indicated in the subscript
and $\iota$ denotes the $n$-vector $\iota=(1,\dots,1)'$.
Note that $\hat{F}_n>0$ with probability one
by our assumptions.

$H_0$ may be re-phrased as the hypothesis
that the best linear predictor of $y$ given $x$ is constant.
An alternative to $H_0$ is the hypothesis
that the Bayes-estimator of $y$ given $x$ is constant, i.e.,
$$
\tilde{H}_0: \E[y\|x] \text{ is constant.}
$$
Testing this non-parametric hypothesis is more difficult.
In the asymptotic setting that we consider in the next section, however,
we find that $H_0$ and $\tilde{H}_0$ are close to each other in the 
sense that the Bayes predictor and the best linear predictor (of $y$ given $x$)
are close in terms of mean-squared prediction error; see
Remark~\ref{pinsker} for details.

\section{Main result}
\label{mainresult}

Our main result is concerned with the asymptotic distribution of 
the $F$-statistic in a local neighborhood of the null-hypothesis.
Here, the local neighborhood
is defined through the requirement that 
$$
	\Delta \quad=\quad 
	\text{Var}(  \beta' x) / \text{Var}( e)
	\quad=\quad
	\beta' M' \Sigma M \beta /s^2
$$
is small.
This quantity can be interpreted as a signal-to-noise ratio
in \eqref{workingmodel} and 
depends on $\theta$, $M$, $\Sigma$ and $\sigma^2=\E[\epsilon^2]$; 
cf. \eqref{betas2}.
If the error $e$ in \eqref{workingmodel} is Gaussian and independent of $x$, then
the $F$-statistic $\hat{F}_n$ is
$F$-distributed with parameters $p$, $n-p-1$ and non-centrality
parameter $n \Delta$; in that case, we have $\P(\hat{F}_n \leq t) =
F_{n,n-p-1,n\Delta}(t)$, where $F_{n,n-p-1,n \Delta}(\cdot)$ denotes
the cumulative distribution function (c.d.f.) 
of the $F$-distribution with indicated parameters.
In our present setting, however, the error $e$ in \eqref{workingmodel}
need not be Gaussian and can depend on $x$.

We will show that the distribution of $\hat{F}_n$ can be approximated
by an $F$-distribution, uniformly over most parameters in the model.
Only for $\epsilon$, $f_{\tilde{z}}$ and $R$, i.e., 
for the error in \eqref{y} and
for the density of the standardized explanatory variables 
as well as the orthogonal matrix in \eqref{z}, some restrictions are needed.
We will require a moment restriction on $\epsilon/\sigma$, and
we will require that $f_{\tilde{z}}$ belongs to one of the classes
${\mathcal F}_{d,k}(D,E)$ introduced earlier.
To formulate the restriction on $R$,
write ${\mathcal O}_{d}$ for the
collection of all orthogonal $d\times d$ matrices and
write $\nu_{d}$ for the uniform distribution on that set; i.e.,
$\nu_{d}$ is the normalized Haar measure on the $d$-dimensional
orthogonal group. 
For $R$, we will require that it  belongs 
to a Borel set ${\mathbb U}\subseteq {\mathcal O}_{d}$
that is large in terms of $\nu_{d}$.

\begin{theorem}\label{t1}
Fix finite constants $D\geq1 $ and $E\geq1$, 
and positive finite constants $\rho\in (0,1)$, $\lambda$, $L$ and $\gamma$.
For each full-rank $d\times p$ matrix $M$, each
$d\times d$ variance/covariance matrix $\Sigma>0$
and each $f_{\tilde{z}}\in {\mathcal F}_{d,20}(D,E)$
there exists a Borel set
${\mathbb U} = {\mathbb U}(M,\Sigma,f_{\tilde{z}}) \subseteq
{\mathcal O}_{d}$ so that
$$
	\sup_{\footnotesize \begin{array}{c}
		M
		\end{array}
	}\;
	\sup_{	
		\Sigma
	}\;
	\sup_{f_{\tilde{z}} \in {\mathcal F}_{d,20}(D,E)}\;
	\nu_{d}({\mathbb U}) 
	\quad
	\stackrel[]{\frac{ p }{\log d} \to 0}{\longrightarrow} 
	\quad
	1
$$
and so that the following holds: If $\Xi_n$ denotes either the quantity
\begin{equation}\label{t1.1}
\sup_{t\in\R} \left|
	\P\Big( \hat{F}_n \leq t \Big) - 
	F_{p, n-p-1, n\Delta}(t) 
	\right|
\end{equation}
or the quantity
\begin{equation}\label{t1.2}
\P\Big(
	\hat{F}_n > F^{-1}_{p, n-p-1, 0}(\alpha)
\Big)
-
\Phi\Big(
	- \Phi^{-1}(\alpha)
	+ \sqrt{n} \Delta \sqrt{ \frac{1-p/n}{2 p/n}}
\Big)
\end{equation}
for some fixed $\alpha\in [0,1]$,
then
$$
\sup_{\footnotesize \begin{array}{c}
		M
	\end{array}
}\;
\sup_{	
	\footnotesize \begin{array}{c}	
		\vartheta, \theta, {\mathcal L}(\epsilon), \mu,
	\Sigma\\ \E|\epsilon/\sigma|^{8+\lambda}\leq L\\
	\Delta< \gamma/\sqrt{n}
\end{array}}\;
\sup_{f_{\tilde{z}}\in {\mathcal F}_{d,20}(D,E)}\;
\sup_{R \in {\mathbb U}}\;
\;\;\Xi_n
	\quad
	\stackrel[\frac{n^2 }{\log d} \to 0, 
		\frac{p}{n}\to \rho]{ n\to\infty}{\longrightarrow} 
	\quad
	0.
$$
This statement continues to hold if the restriction $\Delta<\gamma/\sqrt{n}$
in the last display is replaced by $\Delta < g(n)$ provided that 
$\lim_{n\to\infty} g(n) = 0$.
[Here, the suprema are taken over all full-rank $d\times p$ matrices $M$,
all $\vartheta\in \R$, all $d$-vectors $\theta$ and $\mu$,
all distributions ${\mathcal L}(\epsilon)$ so that $\epsilon$ has
mean zero and finite positive variance, and all symmetric and positive definite
$d\times d$ matrices $\Sigma$, subject to the indicated restrictions.]
\end{theorem}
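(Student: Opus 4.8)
The plan is to pass to the standardized coordinates of \eqref{z} and exploit a universality (Gaussianization) step followed by an exact analysis in the Gaussian case. Writing $z_i = \mu + \Sigma^{1/2} R \tilde z_i$, the centered regressors become $x_i - M'\mu = w \tilde z_i$ with $w = M'\Sigma^{1/2} R$ (a $p\times d$ matrix), and the working-model errors become $e_i = v'\tilde z_i + \epsilon_i$ with $v = R'\Sigma^{1/2}\eta$ and $\eta := \theta - M\beta$. The defining property of the best linear predictor, $M'\Sigma\eta = 0$, translates into the orthogonality $w v = 0$, so the regressor loadings and the error loading occupy orthogonal directions of $\R^d$. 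Since $\hat F_n$ is a function of the $u_i := w\tilde z_i$ and the $e_i$ only, I would first argue that $\tilde z_i$ may be replaced by i.i.d.\ standard Gaussian vectors, and then analyze the resulting $F$-statistic directly.

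For the Gaussianization I would construct $\mathbb U$ as the set of rotations $R$ for which the $p$ rows of $w$ and the vector $v$ are sufficiently delocalized (small normalized $\ell^\infty$/$\ell^4$ norms relative to $\ell^2$). Concentration of the Haar measure $\nu_d$ on $\mathcal O_d$ shows that a single rotated unit vector has $\ell^\infty$-norm of order $\sqrt{(\log d)/d}$ with probability $1-d^{-c}$; a union bound over the $p+1$ relevant directions gives $\nu_d(\mathbb U)\to 1$ as soon as $p/\log d\to 0$, which is the first assertion of the theorem. On $\mathbb U$, a Lindeberg-type swapping argument (controlled by the third and fourth moments supplied by $\mathcal F_{d,20}(D,E)$ together with the delocalization) lets me replace the coordinates of $\tilde z_i$ by Gaussians one at a time; because $\hat F_n$ depends on the data through the $n\times p$ design and the quadratic forms entering numerator and denominator, the accumulated Berry--Esseen error is of order $n^2$ times the per-coordinate delocalization scale, which is where the rate $n^2/\log d\to 0$ enters. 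This step, made uniform over $M$, $\Sigma$, $f_{\tilde z}$ and $R\in\mathbb U$, is the \textbf{heart of the argument and the main obstacle}; I would import the quantitative machinery of \citet{Wan13a} and its refinements in \cite{Ste16a, Ste18a, Ste18b}.

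In the Gaussianized world the analysis becomes transparent. The pair $(u_i, v'\tilde z_i)$ is now jointly Gaussian with cross-covariance $wv = 0$, hence independent; since $\epsilon_i$ is independent of $z_i$ by assumption, the error vector $E = (e_1,\dots,e_n)'$ is independent of the design $X$, while the $e_i$ remain i.i.d.\ with mean $0$ and variance $s^2$ (but non-Gaussian, through $\epsilon$). Conditioning on $X$, the denominator $E'(I_n-P_{\iota,X})E$ has conditional mean $s^2(n-p-1)$ and conditional variance $2s^4(n-p-1) + (\E[e^4]-3s^4)\sum_i (I_n-P_{\iota,X})_{ii}^2$; because the leverage scores of a Gaussian design concentrate, $\sum_i (I_n-P_{\iota,X})_{ii}^2 = O(n)$, which is negligible against $(n-p-1)^2 = O(n^2)$, so $\hat s^2\to s^2$ and the non-Gaussianity of $\epsilon$ does not survive. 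The numerator splits as $\|(I_n-P_\iota)X\beta\|^2 + 2\langle (I_n-P_\iota)X\beta,\,(P_{\iota,X}-P_\iota)E\rangle + \|(P_{\iota,X}-P_\iota)E\|^2$: the first term concentrates at $n\beta'M'\Sigma M\beta = n s^2\Delta$, supplying the non-centrality $n\Delta$, the last behaves like $s^2\chi^2_p$, and the cross term contributes the remaining fluctuation. Collecting these, $\hat F_n$ obeys the same central limit theorem as the nominal $F_{p,n-p-1,n\Delta}$ law; in particular its standardized mean shift equals $\sqrt n\,\Delta\sqrt{(1-p/n)/(2p/n)}$, which \emph{simultaneously} yields the Kolmogorov bound \eqref{t1.1} and the normal power approximation \eqref{t1.2}.

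Finally, the uniformity over the parameter block and the replacement of $\Delta<\gamma/\sqrt n$ by $\Delta<g(n)$ with $g(n)\to 0$ would follow by inspecting the error terms: the universality bound depends on the parameters only through the fixed constants $D,E,\rho,\lambda,L,\gamma$ (via the moment control $\E|\epsilon/\sigma|^{8+\lambda}\le L$ and membership in $\mathcal F_{d,20}(D,E)$), and the Gaussian-case comparison uses $\Delta$ only through $\Delta\to 0$. Hence all bounds are uniform over the indicated suprema, and the conclusion persists verbatim for any $\Delta<g(n)\to 0$.
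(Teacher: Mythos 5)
Your overall architecture (reduce to a tractable case, then run a CLT for the $F$-statistic) parallels the paper's, and several ingredients are right: the orthogonality $wv=0$, the decomposition of numerator and denominator, and the observation that both \eqref{t1.1} and \eqref{t1.2} follow from a common normal limit. But the step you yourself flag as the heart of the argument --- Gaussianizing the design by a coordinate-wise Lindeberg swap on sets of delocalized rotations --- has a genuine gap, and the error accounting attached to it does not match the theorem's hypotheses. Delocalization of the $p+1$ loading directions plus a union bound would give $\nu_d(\mathbb U)\to 1$ under the far weaker condition $p\le d^{c}$, and a swapping error of order ``$n^2$ times the per-coordinate delocalization scale'' would be polynomial in $1/d$, so your argument, if it worked, would prove the result under $d\gg n^{C}$ rather than $\log d\gg n^{2}$. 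It does not work at that rate, because the quantity that actually controls the misspecification effect is the conditional bias and conditional heteroskedasticity of the error given the design, $\E[e\|x]$ and $\Var[e\|x]-s^2$. These are honest functionals of the true (finite-$d$) distribution; for typical rotations they are of order $p/\log d$ (this is the Diaconis--Freedman-type constraint quantified in \citet{Ste18a,Ste18b}, and $\log d$ is the correct scale there, not a power of $d$). Since the numerator $E'P_VE$ aggregates $\E[e\|x]$ over $n$ observations, the statistic is sensitive to this conditional bias at scale $o(1/n)$, which is exactly why the proof needs $np/\log d\to 0$, i.e.\ $n^2/\log d\to 0$ when $p\asymp n$. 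No swap on the design coordinates can carry an error smaller than the change it induces in $\E[\E[e\|x]^2]$, so the claimed polynomial-in-$1/d$ Gaussianization rate is unattainable, and the condition $n^2/\log d\to 0$ ``enters'' nowhere in your accounting.

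The paper's route avoids full Gaussianization: it defines $\mathbb U$ through membership of $\Sigma^{1/2}M(M'\Sigma M)^{-1/2}$ (rotated by $R$) in the good set $\mathbb G$ of \citet[Theorem~2.1]{Ste18b}, on which $\E[e\|x]\approx 0$ and $\Var[e\|x]\approx s^2$ with error $d^{-1/20}+O(p/\log d)$, and then replaces the error (not the design) by the substitute $e^\ast=s(\Var[e\|x])^{-1/2}(e-\E[e\|x])$, showing $\hat F_n(X,Y)-\hat F_n(X,Y^\ast)\to 0$ at any polynomial rate (Lemmas~\ref{lemma:LinQuad} and \ref{Fstat}). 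The substitute errors are conditionally centered and homoskedastic but still non-Gaussian and design-dependent; the normal limit for $\hat F_n(X,Y^\ast)$ then comes from the martingale CLT for quadratic forms in \citet{Ste16a} (building on \citet{Wan13a}), which exploits $p\to\infty$ rather than Gaussianity of the design. Relatedly, your claim that after Gaussianization the term $\|(P_{\iota,X}-P_\iota)E\|^2$ ``behaves like $s^2\chi^2_p$'' is not correct for non-Gaussian $e$; it is a quadratic form that is only asymptotically normal after centering and scaling, and a result of the Wang--Xu/Steinberger type is still needed there. If you want to complete a proof along your lines, you would have to replace the Lindeberg swap by a quantitative statement about the conditional law of $v'\tilde z$ given $w\tilde z$ for most rotations, which is precisely the content of the machinery the paper imports.
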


\begin{remark}\normalfont
\label{pinsker}
Write ${\mathcal R}_N$ and ${\mathcal R}_L$ for the prediction risk
of the Bayes predictor and of the best linear predictor, respectively,
of $y$ given $x$. That is, 
${\mathcal R}_N = \E[ (y - \E[y\|x])^2]$ and
${\mathcal R}_L = \E[ (y - (\alpha+\beta'x))^2]$.
The results of \citet{Ste18a} then entail that,
in the setting of Theorem~\ref{t1}, 
${\mathcal R}_N/{\mathcal R}_L$ converges to one,
uniformly over all the parameters indicated in the last display
of that theorem. In fact, the risk-ratio converges to one uniformly
even if the restriction on $\Delta$ is removed altogether,
and a similar statement holds
for the ratio of conditional risks given $x$, i.e.,
$ \E[ (y - \E[y\|x])^2\|x]/\E[ (y - (\alpha+\beta'x))^2\|x]$.
See Theorem~3.1 of \citet{Ste18a} for a more general form of this
statement under weaker assumptions.
\end{remark}

\begin{remark} \normalfont \label{rateofp}
Although the asymptotic approximations in  Theorem~\ref{t1} require that
$p$ is of the same order as $n$,
we point out that the 
non-central $F$-distribution should still give a reasonable approximation 
to the distribution of the $F$-statistic, i.e., the expression 
in \eqref{t1.1} should be small,  even if $p/n$ is very small, 
and, in particular, if $p$ is fixed while $n$ increases.
This situation is further discussed in \citet[p. 31, Section 3.2.2]{Ste15}
in a setting where $n\to\infty$, $p$ is fixed and $p/\log d\to 0$.
Clearly, the same is not true for the expression in \eqref{t1.2},
because the normal approximation to the $F$ is valid only if 
both degrees of freedom, i.e., $p$ and $n-p-1$, are large. 
The statement  regarding \eqref{t1.2} in Theorem~\ref{t1}
coincides with the conclusion of 
Theorem~1 in \citet{Zho11a} obtained for the correctly specified 
Gaussian error case. Moreover, the Gaussian approximation in \eqref{t1.2}
has the advantage that it is easier to interpret than the more 
complicated distribution function of the non-central 
$F$-distribution in \eqref{t1.1}; see also the discussion 
in \citet[Remark 2.4]{Ste16a}.
\end{remark} 

\section{Simulation analysis} \label{numericalresults} 

Theorem~\ref{t1} is an asymptotic result. In this section, we study
a range of non-asymptotic scenarios through simulation to investigate
how soon these asymptotic approximations become accurate.
We consider a rather small sample size of $n=50$ 
and look at different configurations of the model 
dimensions $d$ and $p$ with $p<d$, and also at different points in
parameter space. 

The theorem contains two asymptotic statements, one about
the distribution of the $F$-statistic and one about the size
of the set $\mathbb U$. For the distribution of the $F$-statistic, 
we compare the rejection probability of the $F$-test under the 
null hypothesis with the nominal significance level $\alpha=0.05$. 
The nominal significance level
provides a natural benchmark.  [Clearly, one can 
also investigate the power of the $F$-test through simulation
experiments, but, unlike the significance level, it is less
obvious what the right benchmark for the power should be.]
In particular, we simulate 1000 independent realizations $F_{j,r}$,
$j=1,\dots,1000$ of the $F$-statistic at sample size $n=50$ 
under the null for each point in parameter space 
(the index $r$ will be explained shortly),
and compare  the empirical significance level 
$\overline{p}_r = 1000^{-1} \sum_{j=1}^{1000} {\mathbf 1}\{ F_{j,r} > 
	F^{-1}_{p,n-p-1,0}(1-\alpha)\}$
with the nominal level $\alpha$.

Gauging the size of $\mathbb U$ is more difficult, because
that set is not given explicitly.  We proceed as follows: We fix all 
the parameters in \eqref{y}--\eqref{z} except for
the orthogonal matrix $R$ in \eqref{z}. We then simulate 100 independent
realizations $R_r$ of $R$, compute $\overline{p}_r$ as outlined
above, $r=1,\dots, 100$, and finally compute 
$\overline{D}=100^{-1} \sum_{r=1}^{100} |\overline{p}_r - \alpha|$.
If $R_r \in \mathbb U$, then $\overline{p}_r$ should be close to $\alpha$,
in view of the last display in Theorem~\ref{t1}.
We use $\overline{D}$ and the empirical distribution of the $\overline{p}_r$, 
$r=1,\dots,100$,
as indicators for the size of $\mathbb U$.

The remaining parameters in \eqref{y}--\eqref{z} and the submodel matrix
$M$ are chosen
as follows for any fixed values of $d$ and $p$:
We do not include an error term in 
the true model, i.e., we set $\sigma^2 = 0$,
because the effect of misspecification becomes more 
pronounced when the error variance $\sigma^2$ is small.\footnote{
	Note that if the error variance $\sigma^2 = \Var[\epsilon_i]$ 
	in the true model $y_i = \theta'z_i + \epsilon_i$ is overly large, 
	i.e., much larger than $\theta' \Sigma\theta$, then
	the scaled true model is essentially given by 
	$y_i/\sigma \approx \epsilon_i/\sigma$. Since the $F$-statistic is 
	scale-invariant and $\epsilon$ is independent of $X$, we then have
	$\hat{F}(X,Y) = \hat{F}(X,Y/\sigma) \approx 
	\hat{F}(X,(\epsilon_i)_{i=1}^n/\sigma) 
	= \hat{F}(X,(\epsilon_i)_{i=1}^n)$. In that case,
	the $F$-statistic will essentially 
	follow the null-distribution and we expect a rejection probability 
	close to the nominal level, irrespective of $\theta$ and $R$.
} 
[Note that the case where $\sigma^2=0$ is not covered by
Theorem~\ref{t1}, but inspection of the proof shows that our
results also apply in this case; cf. Remark~\ref{zerovariance}.]
For $\tilde{z}$, we consider product distributions with zero mean and 
i.i.d. components from the student-$t$ distribution with $2$, $3$ and $5$ 
degrees of freedom, as well as from the centered exponential, uniform, 
Bernoulli$\{-1,1\}$ and Gaussian distributions. [Note that the scaling of 
these distributions is inconsequential, because of the scale-invariance 
of the $F$-statistic $\hat{F}(X,Y)$ in both arguments and the fact that we 
do not include an error term in the full model, i.e., scaling of 
$\tilde{z}_i$ is equivalent to scaling of both 
$y_i = \theta'z_i$ and $x_i = B'z_i$. Similarly, also the scaling of 
$\theta$ and $\Sigma$ has no impact on the value of the $F$-statistic.]
For $\Sigma$, we chose a spiked covariance matrix
$\Sigma = U\text{diag}(\lambda_1,\dots,\lambda_n) U'$ with eigenvalues 
$\lambda_1 = \lambda_2 = 400$ and $\lambda_3=\dots=\lambda_d=1$ and 
an orthogonal matrix of eigenvectors $U$ chosen randomly from the uniform 
distribution on the orthogonal group.\footnote{
	The spiked covariance model 
	corresponds to a factor model where the identity matrix is 
	perturbed by a low rank matrix. It has received much attention 
	in the literature on high dimensional random matrices 
	\citep[e.g.,][]{Bai06a,Cai13a,Don13a,Joh01a}. We have 
	repeated the simulations also with covariance matrices of an 
	AR$(1)$ process and obtained essentially the same results.
} 
The intercept terms $\vartheta$ and $\mu$ are set to zero, for convenience.
For the matrix $M$, which describes the working model, we take
$M$ equal to the $d\times p$ matrix whose $k$-th column is
the $k$-th standard basis vector
in $\mathbb R^d$, $1\leq k \leq p$.
In other words, we consider a sub-model that includes
only the first $p$ regressors (out of $d$).
For the parameter $\theta\in \mathbb R^d$, we need to ensure
that the null hypothesis is satisfied,
i.e., that
$\beta = (M'\Sigma M)^{-1}M'\Sigma\theta = 0$. 
By construction of 
$\Sigma$, $M'\Sigma M$ is regular, and we choose 
$\theta = (I_d-P_{\Sigma M}) V/\|(I_d-P_{\Sigma M}) V\|$, for one realization 
of $V\thicksim N(0,I_d)$, to guarantee that $M'\Sigma\theta = 0$.

The results of the simulations are summarized in Table~\ref{tab:NullMod} 
and Figures~\ref{fig:Boxplots1} and \ref{fig:Boxplots2}. From 
Table~\ref{tab:NullMod}, the overall picture we get is consistent with 
what was predicted by our theory. For all distributions except the Gaussian, 
the average absolute difference between the true (simulated) rejection 
probabilities and the nominal level decreases as $d$ increases.
This phenomenon is most pronounced for the exponential distribution, 
which has a finite moment generating function around the origin, and 
is weakest for the $t(2)$-distribution, which does not even have finite 
variance. For uniformly distributed design, which is bounded, the effect of 
misspecification on the size of the $F$-test is relatively mild already for 
small dimensions. In the Gaussian case, all sub-models of the 
form \eqref{workingmodel} are correct in the
sense that the error $e$ is Gaussian with mean zero and independent of $x$,
so that theoretically the corresponding panel 
in Table~\ref{tab:NullMod} should contain only zeroes. The numbers therefore 
represent only the simulation error and serve as a benchmark for the 
other panels. We also see a monotonic increase, in the deviation of the size 
of the $F$-test from the nominal level, as the dimension $p$ of the 
sub-model increases, which was also suggested by our theory. However, if we 
fix the ratio $p/d=1/2$, i.e., if we move along the staircase pattern in 
each of the panels, except for the heavy tailed distributions $t(3)$ and 
$t(2)$, we still see the effect of misspecification decrease as $d$ 
increases. This suggests that convergence of $n^2 /\log(d)\sim p^2/\log(d)$ 
to zero, as required in Theorem~\ref{t1}, may not be necessary, at least in 
the scenarios considered here.

\begin{table}
\centering
\begin{tabular}{c|cccc | Hcccc} 	
\hline\hline
 $d\backslash p$	& 1		& 2		& 5		& 25		& 	$d\backslash p$	& 1		& 2		& 5		& 25 		\\ \hline
			&		&\multicolumn{2}{c}{$t(5)$}&		&  					&		&\multicolumn{2}{c}{Exp(1)}&	\\
 2				& 0.077	&		&		&		&	2				& 0.141	&		&		&		\\
 4				& 0.056	& 0.076	&		&		&	4				& 0.093	& 0.140	&		&		\\
 10				& 0.032	& 0.047	& 0.066	&		&	10				& 0.052	& 0.071	& 0.109	&		\\
 50				& 0.009	& 0.013	& 0.017	& 0.019	&	50				& 0.014	& 0.015	& 0.020	& 0.033	\\
 100				& 0.007	& 0.008	& 0.009	& 0.010	&	100				& 0.009	& 0.009	& 0.012	& 0.015	\\
 200				& 0.006	& 0.007	& 0.006	& 0.008	&	200				& 0.007	& 0.007	& 0.006	& 0.009	\\ \hline
			&		&\multicolumn{2}{c}{$t(3)$}&		&  					&		&\multicolumn{2}{c}{Unif$[-1,1]$}&	\\
 2				& 0.188	&		&		&		&	2				& 0.025	&		&		&		\\
 4				& 0.158	& 0.225	&		&		&	4				& 0.020	& 0.023	&		&		\\
 10				& 0.122	& 0.167	& 0.238	&		&	10				& 0.011	& 0.014	& 0.016	&		\\
 50				& 0.062	& 0.084	& 0.116	& 0.123	&	50				& 0.006	& 0.006	& 0.007	& 0.007	\\
 100				& 0.048	& 0.061	& 0.081	& 0.082	&	100				& 0.005	& 0.006	& 0.006	& 0.005	\\
 200				& 0.033	& 0.044	& 0.057	& 0.055	&	200				& 0.005	& 0.005	& 0.005	& 0.006	\\ \hline
			&		&\multicolumn{2}{c}{$t(2)$}&		&  					&		&\multicolumn{2}{c}{Gauss}&	\\
 2				& 0.335	&		&		&		&	2				& 0.005	&		&		&		\\
 4				& 0.332	& 0.458	&		&		&	4				& 0.006	& 0.005	&		&		\\
 10				& 0.301	& 0.411	& 0.563	&		&	10				& 0.005	& 0.005	& 0.006	& 		\\
 50				& 0.250	& 0.335	& 0.456	& 0.518	&	50				& 0.005	& 0.006	& 0.005	& 0.005	\\
 100				& 0.228	& 0.314	& 0.412	& 0.457	&	100				& 0.005	& 0.005	& 0.006	& 0.005	\\
 200				& 0.212	& 0.286	& 0.383	& 0.407	&	200				& 0.005	& 0.005	& 0.006	& 0.006	\\ \hline
 \hline
\end{tabular}
\caption{
	Average absolute differences 
	$\bar{D} = \frac{1}{100} \sum_{r=1}^{100} |\bar{p}_r - \alpha|$ of 
	simulated rejection probabilities $\bar{p}_r =\frac{1}{1000} 
	\sum_{j=1}^{1000} \mathbf{1}\{F_{j,r}>F^{-1}_{p, n-p-1,0}(1-\alpha)\}$
	and nominal 
	significance level $\alpha=0.05$ of the $F$-test for $H_0:\beta=0$.
}
\label{tab:NullMod}
\end{table}

\begin{figure}
\includegraphics[width=\textwidth]{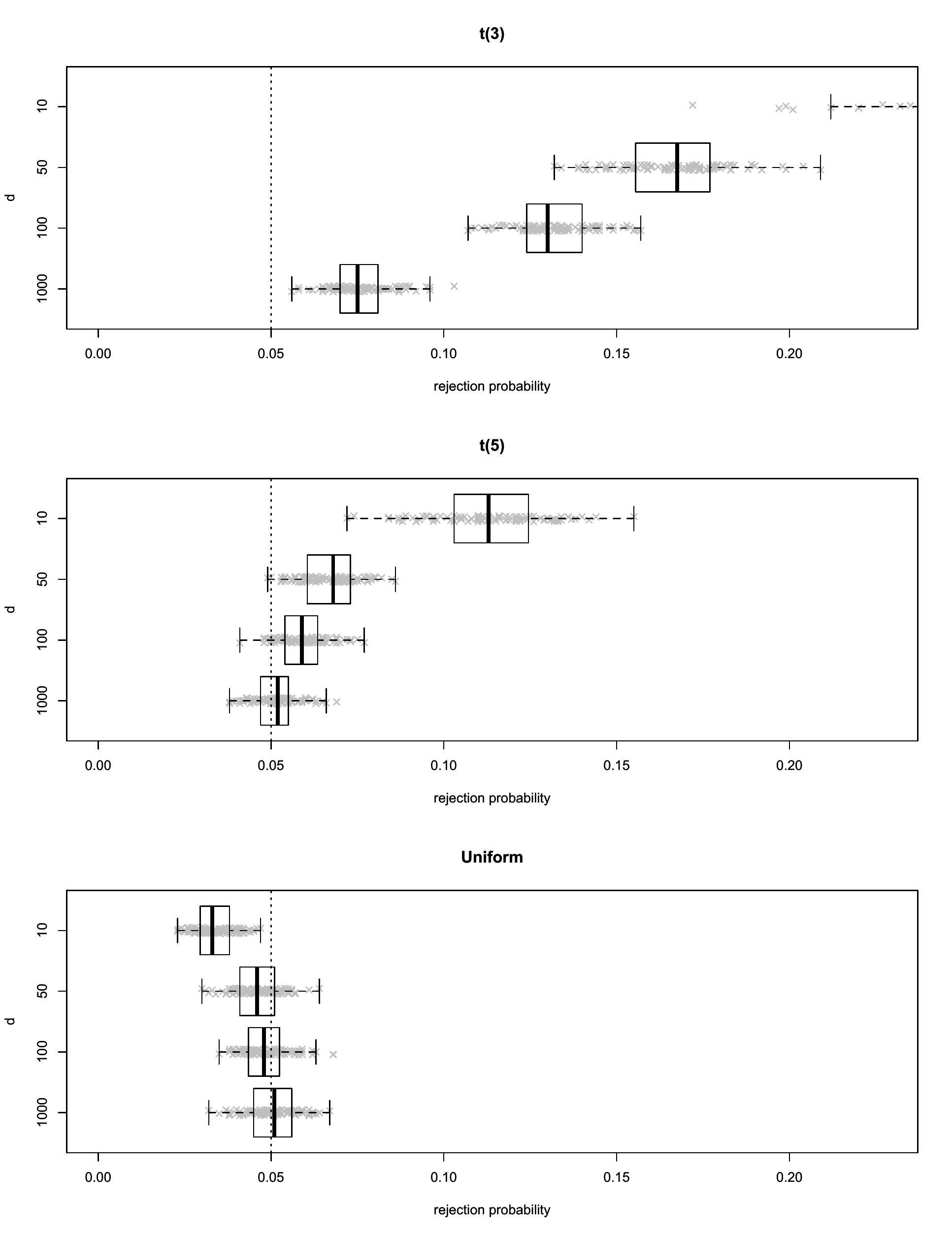} 
\caption{
	Box-plots of simulated rejection probabilities 
	$(\bar{p}_r)_{r=1}^{100}$ (gray crosses) of the $F$-test with 
	$n=50$, $p=5$ and $d=10,50,100,1000$, for different design 
	distributions. Every $r\in\{1,\dots, 100\}$ corresponds to a 
	different $R_r$ applied to the standardized design $\tilde{z}$.
}
\label{fig:Boxplots1}
\end{figure}

\begin{figure}
\includegraphics[width=\textwidth]{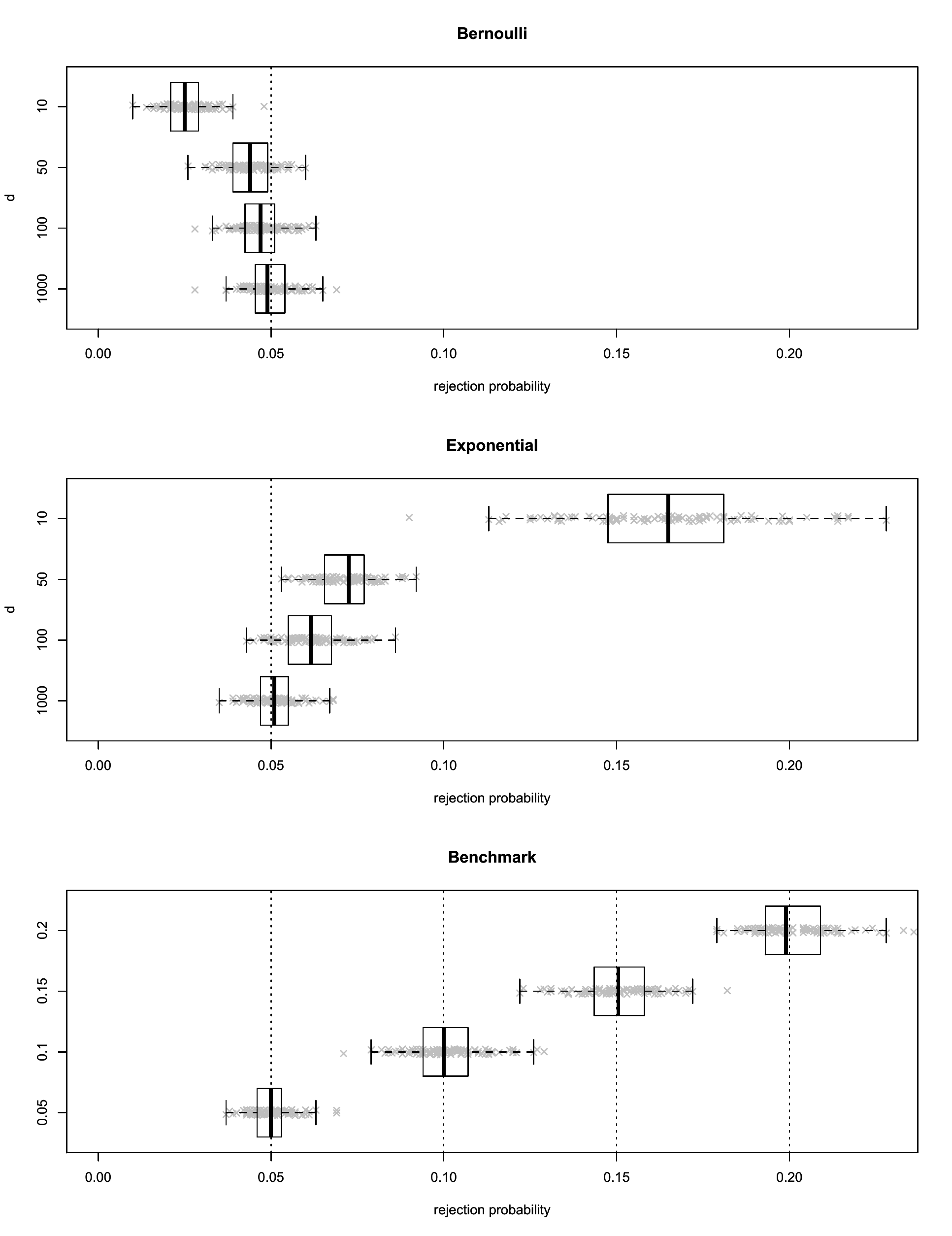} 
\caption{
	Box-plots of simulated rejection probabilities 
	$(\bar{p}_r)_{r=1}^{100}$ (gray crosses) of the $F$-test with $n=50$, 
	$p=5$ and $d=10,50,100,1000$, for Bernoulli$\{-1,1\}$ and exponential 
	design distributions and a benchmark panel of Binomial samples 
	with different success probabilities.
}
\label{fig:Boxplots2}
\end{figure}

In Table~\ref{tab:NullMod}, the effect of the orthogonal matrix $R$
on the actual significance level of the $F$-test was
compressed into one summary statistic, namely the mean absolute deviation 
from the nominal significance level. To get a more comprehensive picture, 
Figures~\ref{fig:Boxplots1} and \ref{fig:Boxplots2} show plots of the 
sample $(\bar{p}_r)_{r=1}^{100}$ (gray crosses) and superimposed box-plots 
for different design distributions. 
Due to limited space we present only the results for sub-models of 
dimension $p=5$. 
In view of Theorem~\ref{t1}, we expect that the size $\mathbb U$, i.e.,
the family of matrices $R$ for which \eqref{t1.1} and \eqref{t1.2}
get small, grows with $d$.
Consequently, we expect that many of the 
$\bar{p}_r$ should be close to $\alpha=0.05$. On the other hand, 
if $d$ is not large then many matrices $R$ will lead to a biased rejection 
probability due to misspecification of the working model. This is exactly 
what we observe in Figures~\ref{fig:Boxplots1} and \ref{fig:Boxplots2}. For 
small values of $d$, the rejection probabilities $\bar{p}_r$ are 
systematically biased and we see some variability of their values due to 
the variation in the choice of $R_r$ (compare benchmark panel in 
Figure~\ref{fig:Boxplots2}). Both the bias and the variability in $\bar{p}_r$ 
reduce when $d$ increases, which is what we expected, as for large $d$, most 
$R_r$ will be favorable and we obtain small misspecification errors 
uniformly over these favorable $R_r$. What is remarkable is the 
systematic over-rejection in case of the $t$- and exponential distribution and 
the under-rejection for Bernoulli and uniformly distributed designs. We 
currently can 
not explain the mechanism that is responsible for this pattern. 
Finally, the benchmark panel shows i.i.d. samples 
$(\tilde{p}_r)_{r=1}^{100}$ with 
$\tilde{p}_r \thicksim \text{Binomial}(1000,\alpha)/1000$ and success 
probabilities $\alpha = 0.05, 0.1, 0.15,0.2$. This provides some idea what 
portion of the variability observed in the other panels is due to random 
simulation error. Clearly, the results in the benchmark panel could have 
been equivalently obtained by repeating the previous simulation for the 
$F$-test with Gaussian design at significance levels 
$\alpha= 0.05,0.1, 0.15, 0.2$.

\section*{Acknowledgments}

The first author's research was  partially supported by 
FWF projects P 26354-N26 and P 28233-N32.

\begin{appendix}

\section{Proofs}

We begin with some preliminary considerations that connect this
paper with the results of \citet{Ste18b}. In particular, we use
Theorem~2.1, parts (ii) and (iii), in that reference
with $Z=\tilde{z}$ and $\tau=1/2$:
If $f_{\tilde{z}}\in\mathcal F_{d,20}(D,E)$, 
then the assumptions of that result are satisfied
in view of Example 3.1 in \citet{Ste18b}.
The theorem guarantees existence of a Borel 
subset $\mathbb G = \mathbb G(f_{\tilde{z}})\subseteq \mathcal V_{d,p}$ 
of the Stiefel manifold $\mathcal V_{d,p}$ of order $d\times p$, that 
depends on the density $f_{\tilde{z}}$, such that for all $t>0$
both
$$
\sup_{B\in\mathbb G} \P\left(\big\| \E[\tilde{z}\| 
	B'\tilde{z}] - BB'\tilde{z}\big\| > t \right) 
$$
and
$$
\sup_{B\in\mathbb G} \P\left(
	\big\|\E[\tilde{z}\tilde{z}'\| B'\tilde{z}] -
		(I_d-B B' + B B' \tilde{z}\tilde{z}' B B')\big\|
	> t \right)
$$
are bounded from above by
\begin{align}\label{approxMeanVar}
	\frac{1}{t} d^{-1/20} + 4\gamma\frac{p}{\log{d}},\\
\end{align}
such that
\begin{align}\label{eq:G1size}
\nu_{d,p}(\mathbb G^c) \;\le\; \kappa d^{-(1-20\gamma\frac{p}{\log{d}})/20},
\end{align}
where $\nu_{d,p}$ denotes the uniform distribution on the Stiefel manifold,
and such that the set $\mathbb G$ is right-invariant under the action of 
$\mathcal O_p$, i.e., $\mathbb G R = \mathbb G$ whenever $R \in \mathcal O_d$.
Here, the constant $\gamma = \gamma(D)$ depends only on $D$, and the 
constant $\kappa = \kappa(E)$ depends only on $E$. 

For any full rank $d\times p$ matrix $M$, any symmetric 
positive definite $d\times d$ matrix $\Sigma$ and 
$f_{\tilde{z}}\in\mathcal F_{d,20}(D,E)$, we define the set 
$$
\mathbb U \;:=\; \mathbb U(M,\Sigma, f_{\tilde{z}}) \;:=\; 
\left\{ R\in \mathcal O_d : R'\Sigma^{1/2} M (M'\Sigma M)^{-1/2} 
	\in \mathbb G(f_{\tilde{z}})\right\}.
$$ 
Now take a random matrix $U$ that is uniformly distributed on 
$\mathcal O_d$ and another random matrix $V$ that is uniformly distributed 
on $\mathcal O_p$, such that $U$ and $V$ are independent, and note that by 
right-invariance of $\mathbb G$,
\begin{align*}
\nu_d(\mathbb U) \; &= \; \P(U \Sigma^{1/2}M(M'\Sigma M)^{-1/2} \in \mathbb G)\\
\;&=\; \P(U \Sigma^{1/2}M(M'\Sigma M)^{-1/2} V \in \mathbb G)
\;=\; \nu_{d,p}(\mathbb G),
\end{align*}
because $\Sigma^{1/2}M(M'\Sigma M)^{-1/2}\in\mathcal V_{d,p}$ 
and $\nu_{d,p}$ is characterized by left and right invariance under the 
appropriate orthogonal groups. 
It follows that $\nu_d(\mathbb U^c)$ is bounded by the expression on 
the right-hand side of \eqref{eq:G1size} whenever 
$f_{\tilde{z}} \in \mathcal F_{d,20}(D,E)$, which establishes the
first claim of Theorem~\ref{t1}. The proof of the second claim
is more elaborate.

The results in the preceding paragraph also show that
the error $e$ in the working model \eqref{workingmodel} 
is such that $\E[e\|x]$ is approximately zero and
$\Var[e\|x]$ is approximately constant, provided that $R \in \mathbb U$:
We first re-write the error $e$ in
a convenient form. Set $\tilde{\theta} = R'\Sigma^{1/2} \theta$
and $\tilde{M} = R' \Sigma^{1/2} M$. Then it is easy to see that
$e =\tilde{\theta}'(I_d-P_{\tilde{M}}) \tilde{z} + \epsilon$
and hence
\begin{align} \label{tmp1}
\begin{split}
\E[e\|x] &\quad=  
	\quad\tilde{\theta}'(I_d-P_{\tilde{M}}) 
	\Big\{\E[\tilde{z}\|P_{\tilde{M}}\tilde{z}]  -
		P_{\tilde{M}}\tilde{z}\Big\} \quad\text{and}
	\\
\E[e^2\|x] - s^2 &\quad=\quad\\
&\hspace{-1cm}
	\tilde{\theta}'(I_d-P_{\tilde{M}}) \Big\{
	\E[\tilde{z}\tilde{z}'\|P_{\tilde{M}}\tilde{z}]
	- ((I_d - P_{\tilde{M}})+P_{\tilde{M}}\tilde{z}\tilde{z}'P_{\tilde{M}})
	\Big\}
	(I_d-P_{\tilde{M}}) \tilde{\theta};
\end{split}
\end{align}
see also \eqref{workingmodel}--\eqref{betas2}.
Our goal is to show that the expressions in the preceding two displays
are approximately zero. To this end, we focus on the expressions
in curly brackets and use Cauchy-Schwarz: For each $t>0$, we have
\begin{align*}
\P( |\E[e\|x]| > t) &\quad\leq\quad 
	\P\left(\Big\|\E[\tilde{z}\|P_{\tilde{M}}\tilde{z}] - 
	P_{\tilde{M}}\tilde{z}\Big\|
	> t / \|(I_d-P_{\tilde{M}})\tilde{\theta}\|\right) \quad\text{and}\\
\P( |\E[e^2\|x] - s^2| > t) &\quad\leq\quad 
\\ & \hspace{-1cm}
	P\left( \Big\| \E[\tilde{z}\tilde{z}'\|P_{\tilde{M}}\tilde{z}] -
		((I_d - P_{\tilde{M}}) + 
			P_{\tilde{M}}\tilde{z}\tilde{z}'P_{\tilde{M}})
	\Big\| > t/\|(I_d-P_{\tilde{M}})\tilde{\theta}\|^2\right).
\end{align*}
Now if $R \in \mathbb U(M,\Sigma,f_{\tilde{z}})$, then it is easy to see that
$\tilde{M}(\tilde{M}'\tilde{M})^{-1/2} \in \mathbb G(f_{\tilde{z}})$.
Because conditioning on $P_{\tilde{M}}\tilde{z}$ is equivalent to
conditioning on $(\tilde{M}'\tilde{M})^{-1/2} \tilde{M}'\tilde{z}$, 
it follows that $\P( |\E[e\|x]| > t)$
is bounded from above by \eqref{approxMeanVar} with
$t$ replaced by $t / \|(I_d-P_{\tilde{M}})\tilde{\theta}\|$
and that $\P( |\E[e^2\|x] - s^2| > t)$
is bounded by \eqref{approxMeanVar}
with $t$ replaced by $t/\|(I_d-P_{\tilde{M}})\tilde{\theta}\|^2$.

The consideration in the preceding paragraph suggests that
the effect of misspecification in \eqref{workingmodel}, where
$\E[e\|x]$ may be non-zero and $\Var[e\|x]$ may be non-constant,
may be negligible in an asymptotic setting where $p/\log d$ becomes small,
provided that $f_{\tilde{z}} \in \mathcal F_{d,20}$ and that
$R \in \mathbb U(M,\Sigma,f_{\tilde{z}})$. This idea is formalized
in  the following two results, which show that the distribution of
certain statistics is unaffected asymptotically if the error $e$
is replaced by a substitute error $e^\ast$ that has mean zero 
and constant variance conditional on $x$.
The following results are stated for sequences 
where the data-generating model \eqref{y}-\eqref{z} and the working model 
\eqref{workingmodel} are allowed to depend on $n$, that is,  a
`triangular array' setting where all parameters depend on $n$.

\begin{lemma}\label{lemma:LinQuad}
Fix finite positive constants $D$ and $E$. For 
every $n\in\N$, let $p_n\le d_n$ be positive integers
so that $n p_n / \log d_n \to 0$ as $n\to\infty$.
For each $n$, consider $(y,z,x)$ as in \eqref{y}--\eqref{x}
but with $d_n$ and $p_n$ replacing $d$ and $p$, respectively,
with $f_{\tilde{z}} \in \mathcal F_{d_n, 20}(D,E)$
and with $R \in \mathbb U(M,\Sigma,f_{\tilde{z}})$.
And for each $n$, consider a sample of $n$ i.i.d. observations 
$(y_i, z_i, x_i)$, $1\leq i \leq n$, of $(y,z,x)$, 
stack the
values of the individual variables into a vector $Y$ and
matrices $Z$ and $X$, respectively, and
write $E = Y - \alpha \iota - X \beta = (e_1,\dots, e_n)'$ 
for the vector of errors from \eqref{workingmodel}. Finally, 
define a vector $E^\ast = (e^\ast_1,\dots,e^\ast_n)'$ of substitute errors
through $e_i^\ast = s(\Var[e_i\|x_i])^{-1/2}(e_i-\E[e_i\|x_i])$.
Then, for every $k\in\R$ and 
(possibly random) symmetric idempotent $n\times n$ matrices $P_n$,
\begin{align}
n^k \|E - E^*\|/s \;&\stackrel{p}{\longrightarrow} \; 0 
	\quad\text{and} \label{eq:LinXi}\\
n^k |E'P_nE - {E^*}'P_nE^*|/s^2\;
	&\stackrel{p}{\longrightarrow}\;0,\label{eq:QuadXi}
\end{align}
as $n\to\infty$.
As a by product, we also obtain that
\begin{align*}
&\max_{i=1,\dots, n} |\Var[e_i\|x_i]/s^2 - 1| \;
	\stackrel{p}{\longrightarrow}\;0.
\end{align*}
\end{lemma}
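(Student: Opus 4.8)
The plan is to deduce both \eqref{eq:LinXi} and \eqref{eq:QuadXi}, as well as the by-product, from one strong concentration estimate for the conditional first and second moments of the working-model error. Writing $a_i=\E[e_i\|x_i]$, $v_i=\Var[e_i\|x_i]$ and $\tau=\|(I_{d_n}-P_{\tilde M})\tilde\theta\|$, the representation $e=\tilde\theta'(I_{d_n}-P_{\tilde M})\tilde z+\epsilon$ together with \eqref{betas2} gives $s^2=\tau^2+\sigma^2$, so that $\tau/s\le1$ and $\tau^2/s^2\le1$. The Cauchy--Schwarz bounds established in the discussion preceding the lemma read, for each $i$ and every $t>0$,
\begin{align*}
\P(|a_i|>t) &\;\le\; \frac{\tau}{t}\,d_n^{-1/20}+4\gamma\frac{p_n}{\log d_n},\\
\P\big(|\E[e_i^2\|x_i]-s^2|>t\big) &\;\le\; \frac{\tau^2}{t}\,d_n^{-1/20}+4\gamma\frac{p_n}{\log d_n}.
\end{align*}
Everything else is algebra built on top of these two inequalities.

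The crucial step is to upgrade these single-observation estimates to a super-polynomial bound on the worst case over the sample. Fix $m>0$ and take $t=s\,n^{-m}$; a union bound over $i=1,\dots,n$, using $\tau/s\le1$, gives
$$
\P\Big(\max_{i\le n}|a_i|/s>n^{-m}\Big)\;\le\; n^{m+1}d_n^{-1/20}+4\gamma\frac{n p_n}{\log d_n}.
$$
The second summand vanishes by hypothesis; and since $n p_n/\log d_n\to0$ with $p_n\ge1$ forces $n=o(\log d_n)$ and $d_n\to\infty$, the first summand is at most $(\log d_n)^{m+1}d_n^{-1/20}\to0$ for every fixed $m$. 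The same choice of $t$ controls $\max_i|\E[e_i^2\|x_i]-s^2|/s^2$, and combining the two via $v_i/s^2-1=(\E[e_i^2\|x_i]-s^2)/s^2-a_i^2/s^2$ yields, for every $m>0$,
$$
\eta_n:=\max_{i\le n}\big(|a_i|/s+|v_i/s^2-1|\big)=o_p(n^{-m}),
$$
which already contains the by-product $\max_i|v_i/s^2-1|\stackrel{p}{\longrightarrow}0$. This is the step I expect to be the main obstacle: the $t$-independent term $4\gamma p_n/\log d_n$ cannot be shrunk by the choice of $t$, so passing to the maximum over $n$ observations costs a full factor $n$ and produces exactly $n p_n/\log d_n$. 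This is precisely why the lemma needs the stronger hypothesis $n p_n/\log d_n\to0$ rather than the $p_n/\log d_n\to0$ sufficient for the size of $\mathbb U$; the $t$-dependent term is harmless because $n=o(\log d_n)$ lets $d_n^{-1/20}$ absorb any fixed power of $n$.

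For the linear statement \eqref{eq:LinXi} I would decompose
$$
\frac{e_i-e_i^\ast}{s}=\frac{e_i}{s}\Big(1-\frac{s}{\sqrt{v_i}}\Big)+\frac{a_i}{\sqrt{v_i}}.
$$
On the event $\{\eta_n\le1/2\}$, which has probability tending to one, one has $v_i\ge s^2/2$ and, by the mean value theorem applied to $\delta\mapsto1-(1+\delta)^{-1/2}$, the factor $1-s/\sqrt{v_i}$ is $O(\eta_n)$ uniformly in $i$. Using $(A+B)^2\le2A^2+2B^2$, summing over $i$, and invoking $\sum_i e_i^2/s^2=O_p(n)$ (immediate from $\E[e_i^2]=s^2$ and Markov) together with $\sum_i a_i^2/v_i\le2n\eta_n^2$, this gives $\|E-E^\ast\|^2/s^2=O_p(n\eta_n^2)$, hence $\|E-E^\ast\|/s=O_p(\sqrt n\,\eta_n)$. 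Since $\eta_n=o_p(n^{-m})$ for every $m$, it follows that $n^k\|E-E^\ast\|/s\stackrel{p}{\longrightarrow}0$ for every $k$.

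Finally, \eqref{eq:QuadXi} reduces to the linear bound. With $D=E-E^\ast$ and $P_n$ a symmetric idempotent matrix, so that $\|P_nw\|\le\|w\|$ for all $w$, one has $E'P_nE-{E^\ast}'P_nE^\ast=2{E^\ast}'P_nD+D'P_nD$, whence
$$
|E'P_nE-{E^\ast}'P_nE^\ast|/s^2\;\le\;2\,(\|E^\ast\|/s)(\|D\|/s)+(\|D\|/s)^2.
$$
Because $\E[(e_i^\ast)^2]=s^2$ we have $\|E^\ast\|/s=O_p(\sqrt n)$, and combining this with the super-polynomial rate for $\|D\|/s$ from the previous paragraph shows that $n^k$ times the right-hand side tends to zero in probability for every $k$, which is \eqref{eq:QuadXi}.
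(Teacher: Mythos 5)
Your proposal is correct and follows essentially the same route as the paper's proof: both arguments feed the preparatory tail bounds on $\E[e\|x]$ and $\E[e^2\|x]-s^2$ into a union bound over the $n$ observations (which is exactly where the hypothesis $np_n/\log d_n\to 0$ is consumed), and both reduce \eqref{eq:QuadXi} to \eqref{eq:LinXi} via Cauchy--Schwarz together with $\|E^\ast\|/s=O_p(\sqrt n)$. The only differences are cosmetic --- you apply the union bound to the conditional moments and then argue deterministically on $\{\eta_n\le 1/2\}$, and you control the $e_i$ through $\sum_i e_i^2/s^2=O_p(n)$ where the paper truncates at $\P(|e_1|/s>n)\le n^{-2}$ --- so there is nothing to flag.
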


\begin{proof}
First, note that $\Var[e_i\|x_i] = 
	\Var[y_i\|x_i] = \Var[\theta'z_i\|x_i] + \sigma^2 > 0$, 
so that $e_i^*$ is well defined (almost surely).
For the claim in \eqref{eq:LinXi}, fix $k\in\R$ and $t>0$, and 
consider $\P(n^k\|E-E^*\|/s >t) \le 
	n \P(n^{2k+1} |e_1-e_1^*|^2/s^2>t^2)$.
Now, using the simple observation 
$|\sqrt{\Var[e_1\|x_1]}-s| = 
	|\Var[e_1\|x_1]-s^2|/|\sqrt{\Var[e_1\|x_1]}+s| \le 
	|\Var[e_1\|x_1]-s^2|/s$, 
we get
\begin{align*}
	|e_1-e_1^*|/s &= 
		(s^2 \Var[e_1\|x_1])^{-1/2} 
		\left|e_1(\sqrt{\Var[e_1\|x_1]} - s) + 
		s\E[e_1\|x_1]\right| \\
	&\le
	\frac{s}{\sqrt{\Var[e_1\|x_1]}} 
		\left( \frac{|e_1|}{s}\frac{|\Var[e_1\|x_1] - 
		s^2|}{s^2} + \frac{|\E[e_1\|x_1]|}{s}\right),
\end{align*}
and furthermore
\begin{align}
&\P(n^{2k+1} |e_1-e_1^*|^2/s^2>t^2)\nonumber\\
&\le
\P\left(
	n^{k+1/2}\left| \frac{|e_1|}{s}\frac{|\Var[e_1\|x_1] 
	- s^2|}{s^2} + \frac{|\E[e_1\|x_1]|}{s}\right|
	> t/\sqrt{2}
\right)\notag\\
&\quad\quad + \P\left(
	\frac{s^2}{\Var[e_1\|x_1]}	> 2
\right) \notag\\
&\le
\P\left(
	\left|\frac{\Var[e_1\|x_1]}{s^2} - 1\right|> \frac{1}{2}
\right) 
+
\P\left(
	n^{k+1/2} \frac{|e_1|}{s}\frac{|\Var[e_1\|x_1] - s^2|}{s^2}
	> t/2^{3/2}
\right) \notag\\
&\quad\quad+
\P\left(
	n^{k+1/2}\frac{|\E[e_1\|x_1]|}{s}
	> t/2^{3/2}
\right)\notag\\
\begin{split} \label{tmp2}
&\le
\P\left(
	\frac{|\Var[e_1\|x_1]-s^2|}{s^2}> \frac{1}{2}
\right) 
+
\P\left(
	n^{k+3/2} \frac{|\Var[e_1\|x_1] - s^2|}{s^2}
	> t/2^{3/2}
\right)\\
&\quad\quad+\P\left(
	\frac{|e_1|}{s} > n 
\right) 
+
\P\left(
	n^{k+1/2}\frac{|\E[e_1\|x_1]|}{s}
	> t/2^{3/2}
\right).
\end{split}
\end{align}
The claim \eqref{eq:LinXi} will follow 
if each of the four terms in \eqref{tmp2} is of  the order $o(1/n)$.
Because $f_{\tilde{z}}\in\mathcal F_{d_n,20}(D,M)$
and $R \in \mathbb U(M,\Sigma,f_{\tilde{z}})$,
the considerations leading up to Lemma~\ref{lemma:LinQuad}
apply. Also note that
$\|(I_d-P_{\tilde{M}})\tilde{\theta}\|^2 \leq s^2$.
For the last term in \eqref{tmp2}, we obtain,
for every $t>0$, that
\begin{align*}
\P\left(
	n^{k+1/2}\frac{|\E[e_1\|x_1]|}{s} > t
\right)
\le
t^{-1}n^{k+1/2}d_n^{-1/20}  + 4 \gamma  \frac{p_n}{\log{d_n}},
\end{align*}
and the upper bound goes to zero as $o(1/n)$
in view of the assumption that $n p_n / \log d_n \to 0$.
For the second-to-last term in \eqref{tmp2}, we have
$\P(|e_1|/s>n) \leq n^{-2} \E[e_1^2/s^2] = 1/n^2$. For the second term
in \eqref{tmp2}, we proceed like for the last term in \eqref{tmp2}.
In particular, we obtain, for any $t>0$, that
\begin{align}\label{eq:CondVarop1}
&\P\left(
	n^{k+3/2}\frac{|\Var[e_1\|x_1]-s^2|}{s^2} > t
\right)\\
&\quad\le
\P\left(
	n^{k+3/2}\frac{|\E[e_1^2\|x_1]-s^2|}{s^2} > t/2
\right)
+
\P\left(
	n^{k+3/2}\frac{|\E[e_1\|x_1]|^2}{s^2} > t/2
\right)\notag\\
&\quad\le
\frac{2}{t} n^{k+3/2} d^{-1/20} + 
\left(\frac{2}{t} n^{k+3/2}\right)^{1/2} d^{-1/20} + 
8 \gamma \frac{ p_n }{\log d_n}.
\end{align}
Again, this upper bound goes to zero as $o(1/n)$ because $n p_n/\log d_n \to 0$.
Note that the considerations in  the preceding display
also entail that $\P(\max_{i=1,\dots,n} |\Var[e_i\|x_i]/s^2 - 1|>t) \le 
n \P(|\Var[e_1\|x_1]/s^2 - 1|>t) \to 0$.

For the claim in \eqref{eq:QuadXi}, write
\begin{align*}
|E'P_nE - {E^*}'P_nE^*| 
&=
|(E - E^*)'P_nE + {E^*}'P_n(E-E^*) |\\
&\le
\|E - E^*\| \|E\| + \|E - E^*\| \|E^*\|,
\end{align*}
and note that by definition of $e_1^*$ and the variance decomposition formula, 
we have $\E[e_1^*] = \E[\E[e_1^*\|x_1]] = 0$ and 
$\Var[e_1^*] = \E[\Var[e_1^*\|x_1]] + \Var[\E[e_1^*\|x_1]] = s^2$, so that by 
independence $\|E^*\|/s = O_\P(\sqrt{n})$. Premultiplying 
by $n^k/s^2$ in the previous display and applying 
\eqref{eq:LinXi} finishes the proof of the second claim.
\end{proof}

\begin{lemma}\label{Fstat}
Fix $K \in (0,\infty)$ and an integer $l\geq -1$.
Under the assumptions and in the notation of Lemma~\ref{lemma:LinQuad},
assume that $\E[| \epsilon/\sigma|^4] \leq K$ for each $n$,
that $\Delta = \Var(\beta'x)/\Var(e) = O(n^l)$ and
that $\limsup_{n\to\infty} p_n/n < 1$.
Define substitute data $Y^\ast = \iota \alpha + X\beta + E^\ast$.
Then, for every $k\in \mathbb R$, we have
$$
n^k \left( \hat{F}_n(X,Y) - \hat{F}_n(X,Y^\ast)
\right)\quad\stackrel{p}{\longrightarrow}\quad 0
$$
as $n\to\infty$.
\end{lemma}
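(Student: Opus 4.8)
The plan is to exhibit $\hat F_n$ as a ratio of two quadratic forms whose matrices are functions of $X$ alone, and then to replace $E$ by $E^\ast$ one quadratic form at a time by invoking Lemma~\ref{lemma:LinQuad}. Set $Q_1 := P_{\iota,X}-P_\iota$ and $Q_2 := I_n - P_{\iota,X}$. Both are symmetric, idempotent and measurable with respect to $X$, hence are legitimate choices for the matrices $P_n$ in Lemma~\ref{lemma:LinQuad}. The usual regression identities $\|(I_n-P_\iota)X\hat\beta\|^2 = Y'Q_1 Y$ and $(n-p-1)\hat s^2 = Y'Q_2 Y$ give $\hat F_n(X,Y) = \tfrac{n-p-1}{p}(Y'Q_1 Y)/(Y'Q_2 Y)$, and likewise for $Y^\ast$; with probability tending to one both denominators are positive, so $\hat F_n$ equals this ratio. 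Because $Q_1\iota = Q_2\iota = 0$, $Q_2 X = 0$ and $X'Q_1 = X'(I_n-P_\iota)$, the intercept drops out and
\[
Y'Q_1 Y \;=\; \|(I_n-P_\iota)X\beta\|^2 + 2\,\beta'X'(I_n-P_\iota)E + E'Q_1 E ,
\]
while $Y'Q_2 Y = E'Q_2 E$; the identical expressions hold for $Y^\ast$ after replacing $E$ by $E^\ast$. The key structural point is that the signal term $\|(I_n-P_\iota)X\beta\|^2$ is shared and cancels in every difference between the $Y$- and the $Y^\ast$-versions.

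Writing $a = Y'Q_1 Y$, $b = Y'Q_2 Y$ and $a^\ast, b^\ast$ for the $Y^\ast$-analogues, I would next show that $a-a^\ast$ and $b-b^\ast$ are $o_\P(s^2 n^{-m})$ for every $m$. For the quadratic parts $E'Q_j E - {E^\ast}'Q_j E^\ast$ this is exactly \eqref{eq:QuadXi}. For the cross term I would use Cauchy--Schwarz, $|\beta'X'(I_n-P_\iota)(E-E^\ast)| \le \|(I_n-P_\iota)X\beta\|\,\|E-E^\ast\|$, and combine \eqref{eq:LinXi}, which gives $\|E-E^\ast\|/s = o_\P(n^{-m})$ for every $m$, with the crude bound $\E\|(I_n-P_\iota)X\beta\|^2 = (n-1)\Var(\beta'x) = (n-1)\Delta\,s^2 = O(n^{l+1})\,s^2$; since the signal norm is thus only polynomially large in $n$, the whole cross-term difference is again $o_\P(s^2 n^{-m})$ for every $m$.

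It then remains to pass from the quadratic forms to their ratio. From the identity $\tfrac ab - \tfrac{a^\ast}{b^\ast} = \tfrac{a-a^\ast}{b} + \tfrac{a^\ast}{b}\cdot\tfrac{b^\ast-b}{b^\ast}$ one sees that, provided $b$ and $b^\ast$ are bounded below by $c\,s^2 n$ with probability tending to one and $a^\ast = O_\P(s^2\,\mathrm{poly}(n))$ (which follows from the decomposition above together with $\|E^\ast\|/s = O_\P(\sqrt n)$ and $\Delta = O(n^l)$), the difference $\tfrac ab - \tfrac{a^\ast}{b^\ast}$ is $o_\P(n^{-m})$ for every $m$. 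Multiplying by $\tfrac{n-p-1}{p}\le n$ absorbs one further power of $n$ and yields the assertion, since the property ``$o_\P(n^{-m})$ for every $m$'' is stable under multiplication by any fixed power of $n$.

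The main obstacle will be the lower bound $b^\ast = {E^\ast}'Q_2 E^\ast \ge c\,s^2 n$ (the bound for $b$ then follows because $b-b^\ast = o_\P(s^2)$). Conditionally on $X$ the substitute errors $e_i^\ast$ are independent with mean zero and variance exactly $s^2$, so $\E[b^\ast\mid X] = s^2\,\trace(Q_2) = s^2(n-p-1)$, which is of order $s^2 n$ precisely because $\limsup_n p_n/n < 1$. A conditional Chebyshev argument then delivers the lower bound as soon as $\Var[b^\ast\mid X]$ is of smaller order than $(s^2 n)^2$. Since $\Var[{E^\ast}'Q_2E^\ast\mid X] \le C\,s^4\,\trace(Q_2)\bigl(1 + \max_i \E[(e_i^\ast/s)^4\mid x_i]\bigr)$, this reduces to controlling the conditional fourth moments of the $e_i^\ast$. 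Using $e_i = \tilde\theta'(I_d-P_{\tilde M})\tilde z_i + \epsilon_i$, the contribution of $\epsilon_i$ is harmless under $\E[(\epsilon/\sigma)^4]\le K$, and everything comes down to bounding the conditional kurtosis of the linear form $\tilde\theta'(I_d-P_{\tilde M})\tilde z$ given $P_{\tilde M}\tilde z$, uniformly over favorable $R\in\mathbb U$. This is the delicate step: it extends the first- and second-conditional-moment estimates recorded before Lemma~\ref{lemma:LinQuad} to the fourth conditional moment, and I expect it to rest on the high-order moment assumption $f_{\tilde z}\in\mathcal F_{d,20}(D,E)$ together with the conditional-moment concentration results of \citet{Ste18b}.
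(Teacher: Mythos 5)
Your overall route is in substance the paper's own: the same decomposition of $\hat F_n$ into the quadratic forms $Y'Q_1Y$ and $Y'Q_2Y$ (the paper writes the resulting identity as $\hat F_n(X,Y)=\delta_n^{(1)}\hat F_n(X,Y^\ast)+\delta_n^{(2)}$ on an event $C_n$), the same term-by-term replacement of $E$ by $E^\ast$ via \eqref{eq:LinXi}--\eqref{eq:QuadXi}, and the same need for a polynomial bound on the signal and a lower bound on the residual sum of squares. Your treatment of the cross term through $\E\|(I_n-P_\iota)X\beta\|^2=(n-1)\Delta s^2=O(n^{l+1})s^2$ is a legitimate minor shortcut; the paper instead factors $\|V\beta\|$ through $\|(I_n-P_\iota)X(\tilde M'\tilde M)^{-1/2}\|$ and $\|(\tilde M'\tilde M)^{1/2}\beta\|$, which is equivalent for present purposes.

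The genuine gap is the step you yourself flag as delicate: the lower bound ${E^\ast}'Q_2E^\ast\ge c\,s^2n$, which you reduce to controlling $\max_i\E[(e_i^\ast/s)^4\|x_i]$ and which you expect to require extending the conditional-moment results of \citet{Ste18b} to fourth conditional moments, uniformly over favorable $R\in\mathbb U$. No such extension is available, and none is needed. First, in your conditional Chebyshev bound the maximum can be replaced by a sum, since $\sum_i (Q_2)_{ii}^2\,\E[(e_i^\ast)^4\|x_i]\le\sum_i\E[(e_i^\ast)^4\|x_i]$, so it suffices that the \emph{average} $n^{-1}\sum_i\E[(e_i^\ast/s)^4\|x_i]$ be $O_\P(1)$. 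Second, by the by-product of Lemma~\ref{lemma:LinQuad}, $\max_j s^2/\Var[e_j\|x_j]\to1$ in probability, so this average is bounded by $(1+o_\P(1))\,2^4\,n^{-1}\sum_i\E[(e_i/s)^4\|x_i]$, and this nonnegative average is $O_\P(1)$ by Markov's inequality as soon as the \emph{unconditional} moment $\E[(e_1/s)^4]$ is bounded in $n$. That unconditional bound follows from $e=\tilde\theta'(I_d-P_{\tilde M})\tilde z+\epsilon$, the assumption $\E[(\epsilon/\sigma)^4]\le K$, and Rosenthal's inequality applied to the linear form in the independent coordinates of $\tilde z$; no conditional concentration uniform in $R$ enters. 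This is exactly how the paper closes the argument (it feeds the same moment bound into Lemma~C.1 of \citet{Ste16a} to obtain $\hat s^{\ast2}/s^2\to1$ in probability, which is its version of your lower bound, and which also yields $\P(C_n^c)\to0$). With this substitution your proof is complete; as written, its last step is open and points toward a substantially harder route than necessary.
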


\begin{proof}
The idea is to use Lemma~\ref{lemma:LinQuad} to approximate $\hat{F}_n(X,Y)$ 
by $\hat{F}_n(X,Y^*)$. In particular, we will show that on some event 
$C_n$ to be defined below, we have
$$
n^k\left|\hat{F}_n(X,Y) - \hat{F}_n(X,Y^*)\right|
\le n^{k+l+1}|\delta_n^{(1)}-1|\hat{F}_n(X,Y^*)/n^{l+1}  + n^k|\delta_n^{(2)}|,
$$
where $\delta_n^{(1)}$ converges 
to one and $\delta_n^{(2)}$ converges to zero, both at an arbitrary polynomial 
rate in $n$, and where $\hat{F}_n(X,Y^*)/n^{l+1} = O_\P(1)$. 
The probability of $C_n$ will be shown to converge to one.
The claim of the lemma follows from this.

Set $U = [\iota, X]$, where $\iota=(1,\dots,1)'\in\R^n$. 
With this, define the event 
$C_n = \{\det{U'U}\ne0, E'(I_n-P_{U})E>0, 
	{E^*}'(I_n-P_{U})E^*>0\}$. 
On $C_n$, by block matrix inversion, we have 
$[0,I_{p_n}](U'U)^{-1}U' = [X'(I_n-P_\iota)X]^{-1}X'(I_n-P_\iota)$. 
Using the abbreviation $V=(I_n-P_\iota)X$, 
we thus see that $\hat{\beta} = \beta + (V'V)^{-1} V' E$ and that
the $F$-statistic $\hat{F}_n(X,Y)$ can be written as
\begin{align*}
\hat{F}_n(X,Y) &= 
\frac{n-p_n-1}{p_n} \frac{\|V \hat{\beta}\|^2}{ \|(I-P_U)Y\|^2}
\;=\;
\frac{n-p_n-1}{p_n}
	\frac{E'P_VE + 2E'V\beta + \beta'V'V\beta}
	{E'(I_n-P_U)E}\\
&=
\frac{{E^*}'(I_n-P_U)E^*}{E'(I_n-P_U)E} \hat{F}_n(X,Y^*) 
\; + \;
	\frac{E'P_VE - {E^*}'P_V{E^*} + 2(E-{E^*})'V\beta}
	{p_n{E}'(I_n-P_U)E/(n-p_n-1)}.
\end{align*}
This establishes a representation 
$\hat{F}_n(X,Y) = \delta_n^{(1)} \hat{F}_n(X,Y^*) + \delta_n^{(2)}$ on $C_n$. 
On the complement of $C_n$, we set $\delta_n^{(1)} = \delta_n^{(2)}=0$, say.
We next show that for every fixed $k\in\R$, $n^k(\delta_n^{(1)}-1) =o_\P(1)$ 
and $n^k\delta_n^{(2)} =o_\P(1)$.

To verify the claimed properties of these quantities, on $C_n$, consider first
\begin{align*}
\delta_n^{(1)} -1 =
\frac{{E^*}'(I_n-P_U)E^*-E'(I_n-P_U)E}{s^2 
	(n-p_n-1)}\frac{s^2(n-p_n-1)}{E'(I_n-P_U)E}.
\end{align*}
Using Lemma~\ref{lemma:LinQuad}, we see that the first fraction in this 
representation multiplied by $n^k$ converges to zero in probability. 
The second fraction obviously equals $s^2 / \hat{s}^2$.
Define $\hat{s}^{*2}$ like $\hat{s}^2$ (see the discussion following
\eqref{hypotheses}) but with $Y^*$ replacing $Y$.
We show that $\hat{s}^2/s^2 = 
	\hat{s}^{*2}/s^2 + (\hat{s}^2 - 
	\hat{s}^{*2})/s^2 \to 1$
in probability.
To see this, first note that the convergence to zero 
of $(\hat{s}^2 - \hat{s}^{*2})/s^2$ follows again from 
Lemma~\ref{lemma:LinQuad}. For the ratio $\hat{s}^{*2}/s^2$, 
convergence to $1$ in probability follows, 
e.g., from Lemma~C.1 in \citet{Ste16a}, upon 
verifying its assumptions. 
To this end, it remains to show that 
$n^{-1} \sum_{i=1}^n \E[ (e^*_i/s)^4\|x_i] = O_\P(1)$.
Using $(a+b)^4\le 2^{3}(a^4+b^4)$, for $a,b\in\R$, we have
\begin{align*}
\frac{1}{n}\sum_{i=1}^n \E[(e_i^*/s)^4\|x_i] 
&\le
\max_{j=1,\dots,n}\left(\frac{s^2}{\Var[e_j\|x_j]}\right)^2 
\frac{1}{n}\sum_{i=1}^n \E[(e_i/\sigma-\E[e_i/s\|x_i])^4\|x_i] \\
&\le
\max_{j=1,\dots,n}\left(\frac{s^2}{\Var[e_j\|x_j]}\right)^2 
2^4 \frac{1}{n}\sum_{i=1}^n \E[(e_i/s)^4\|x_i].
\end{align*}
The maximum in the preceding display converges to one in probability if 
$\min_j \Var[e_j/s\|x_j]$ converges to one in probability, which follows from 
Lemma~\ref{lemma:LinQuad}. The arithmetic mean of the conditional 
fourth moments is $O_\P(1)$ if the unconditional mean of
forth moments is bounded in $n$. To this end,
note that we have 
$e = \tilde{\theta}'(I_d - P_{\tilde{M}}) \tilde{z}+\epsilon$
and
$s^2 = \|(I_d-P_{\tilde{B}})\tilde{\theta}\|^2 +\sigma^2$; 
cf. \eqref{betas2} and the discussion right before \eqref{tmp1}.
With this, we get
\begin{align*}
(e_i/s)^4 
&= \left( \theta'(I_d-P_{\tilde{M}})\tilde{z}_i /s + \epsilon_i/s\right)^4
\le 
2^3[(\tilde{\theta}'(I_d-P_{\tilde{B}})\tilde{z}_i/s)^4 + 
	(\epsilon_i/s)^4]\\
&\le
2^3[(\tilde{\theta}'(I_d-P_{\tilde{B}})\tilde{z}_i/\|
	\tilde{\theta}'(I_d-P_{\tilde{B}})\|)^4 + (\epsilon_i/\sigma)^4],
\end{align*}
and take expectations. The claim follows now from 
$\E[(\epsilon_i/\sigma)^4]\le K$ and the fact that the fourth 
spherical moment of $\tilde{z}_i$ is uniformly 
bounded in view of Rosenthal's inequality \citep[Theorem 3]{Ros70} 
and the assumption that $f_{\tilde{z}} \in \mathcal F_{d_n,20}(D,E)$. 
Note that this also entails 
$\P(C_n^c) \le \P(\hat{s}^{*2}=0)+\P({\hat{s}_n}^2=0) 
	\le \P(|\hat{s}^{*2}/s^2 - 1|>1/2)+
	\P(|{\hat{s}}^2/s^2 - 1|>1/2) \to 0$.

To see that also $\delta_n^{(2)}$ behaves as desired, first note that on $C_n$, 
\begin{align*}
n^k \delta_n^{(2)} =
\frac{n^k}{p_n}
\left(
	\frac{E'P_V E - {E^*}'P_V{E^*} }{s^2} + \frac{2(E-{E^*})'V\beta}{s^2}
\right)
\frac{s^2}{\hat{s}^2}.
\end{align*}
The factor $n^k/p_n$  can be bounded by $\kappa n^{k-1}$ for some 
constant $\kappa$ by assumption;
the ratio $s^2/\hat{s}^2$ was shown to converge to one in probability in 
the preceding paragraph. The difference of quadratic forms 
converges to zero in probability by Lemma~\ref{lemma:LinQuad}, 
even when multiplied by $\kappa n^{k-1}$. 
Noting that 
$\|V \beta\| = \|(I_n-P_\iota) X \beta \| \leq
	\|(I_n - P_\iota) X (\tilde{M}'\tilde{M})^{-1/2}\| 
	\| (\tilde{M}'\tilde{M})^{1/2} \beta\|$,
the scaled second term in parentheses, i.e.,
$(n^k/p_n) 2 (E-E^*)' V \beta/s^2$, 
can be bounded by
\begin{align*}
2 \kappa n^{k+l/2} \frac{\|E-{E^*}\|}{s} 
	\frac{\|(\tilde{M}'\tilde{M})^{1/2}\beta\|}{s n^{l/2}} 
	\left\|(I_n-P_\iota)X(\tilde{M}'\tilde{M})^{-1/2}\right\|/n, 
\end{align*}
where $n^{k+l/2}\|E-E^*\|/s$ converges to zero in 
probability by Lemma~\ref{lemma:LinQuad} and 
$n^{-l}\beta'(\tilde{M}'\tilde{M})\beta/s^2 = n^{-l} \Delta = O(1)$ 
by assumption. 
It remains to show that the largest singular value of 
$(I_n-P_\iota)X(\tilde{M}'\tilde{M})^{-1/2}/n$ is bounded in probability. 
Due to the projection onto the orthogonal complement of $\iota$, 
the distribution of this quantity does not depend on the parameter 
$\mu$, which is why we may assume that $\mu=0$ for this part of 
the argument. Abbreviate $\bar{X} = X (\tilde{M}'\tilde{M})^{-1/2}$, 
$\bar{x}_i = (\tilde{M}'\tilde{M})^{-1/2}x_i$ and consider 
$\|(I_n-P_\iota)\bar{X}/n\|^2 \le \trace(\bar{X}'\bar{X}/n^2) = 
	\sum_{i=1}^n \|\bar{x}_i\|^2/n^2$. 
Taking expectation, noting that $\E[\|\bar{x}_1\|^2] = p_n$ and 
$p_n/n=O(1)$, we arrive at the desired boundedness in probability.

It remains to show that $\hat{F}_n(X,Y^*)/n^{l+1}=O_\P(1)$. 
To this end, recall that
$\hat{s}^{*2}/s^2 \to 1$ in probability, and one 
easily verifies that
\begin{align*}
\E\left[\frac{ \hat{s}^{*2} }{s^2}\hat{F}_n(X,Y^*)/n^{l+1} \right]
&= 
\E\left[
({E^*}'P_V{E^*} + 2{E^*}'V\beta + \beta'V'V\beta)/(p_ns^2n^{l+1})
\right]\\
&=
\frac{1}{n^{l+1}} + \frac{n-1}{np_n} \frac{\Delta}{n^l} 
	= O(1);
\end{align*}
here, the first equality is obtained by arguing as in the first paragraph
of the proof but with $Y^\ast$ replacing $Y$, and 
the second equality follows upon noting that
$\beta'V'V\beta = \trace(I_n-P_\iota) X\beta \beta'X'$
and that $X\beta$ is a vector with i.i.d. components,
each of which has variance $\beta'M'\Sigma M\beta = s^2 \Delta$.
\end{proof}

\begin{proof}[Proof of Theorem~\ref{t1}]
Define $\mathbb U = \mathbb U(M,\Sigma,f_{\tilde{z}})$
as in the beginning of the appendix and note that 
the first statement in the theorem, concerning $\nu_d(\mathbb U)$,
has already been established there.
For the second statement, concerning $\Xi_n$, let
$p_n\leq d_n$ be positive integers so that
$n^2 p_n/\log d_n \to 0$  and so that $p_n/n \to \rho \in (0,1)$
as $n\to\infty$.
For each $n$, consider a sample of i.i.d. observations $(y_i, z_i, x_i)$,
$1\leq i \leq n$, as in Lemma~\ref{lemma:LinQuad}, so that the underlying
quantities
(i.e., $M$, $\vartheta$, $\theta$, $\mathcal L(\epsilon)$,
$\mu$, $\Sigma$, $\Delta$, $f_{\tilde{z}}$, and $R$) 
satisfy the restrictions in the suprema in the last display
of Theorem~\ref{t1}. 
For given $M$, we stress that the restriction on $\Delta$ implicitly 
also restricts the parameters $\theta$, $\Sigma$ and $\sigma^2$;
see the definition of $\Delta$ at the beginning of Section~\ref{mainresult}
as well as the relations in \eqref{betas2}.
We have to show that $\Xi_n \to 0$ as $n\to\infty$.

Set $a_n = 2(1/p_n+1/(n-p_n-1))$ and 
$b_n = \sqrt{\frac{(1-(p_n+1)/n)(1-1/n)}{2p_n/n}}$ for each $n$,
and define $Y^\ast$ for each $n$ as in Lemma~\ref{Fstat}.
We first show that
\begin{align}\label{eq:FstarConv}
a_n^{-1/2}(\hat{F}_n(X,Y^*) - 1) - \sqrt{n}\Delta b_n 
\quad\xrightarrow[n\to\infty]{w} \quad N(0,1)
\end{align}
by verifying the assumptions of Theorem~2.1(i) in \citet{Ste16a} 
for the sample $(y_i^*,x_i)_{i=1}^n$,
with the symbols $s_n$, $\Delta_\gamma$ and $R_0$ in that reference
equal to $a_n$, $\Delta$, and $[0,I_{p_n}]$, respectively.
In particular, we need to verify conditions (A1).(a,b,c,d) and (A2)
in that reference.
The design conditions (A1).(a,c,d)
are easily verified by use of Lemma~A.2(i) in \citet{Ste16a}.
And our assumptions that $f_{\tilde{z}}\in\mathcal F_{d_n,20}(D,M)$ and 
that $p_n<n-1$ imply condition~(A1).(b).
Assumption (A2) on the scaled errors $e_i^*/s$ is established by 
an argument similar to the one also used in the third paragraph of 
the proof of Lemma~\ref{Fstat} but for the 
$(8+\kappa)$-th moment instead of the fourth moment:
Simply decompose $e_i^* = e^\circ_i \tilde{\eps}_i$, with 
$e^\circ_i = \sqrt{s^2/\Var[e_i\|x_i]}$ and 
$\tilde{\eps}_i = e_i-\E[e_i\|x_i]$, and use 
Lemma~\ref{lemma:LinQuad} as before to get $\max_{i=1,\dots,n}e^\circ_i \to 1$
in probability. Then, the assumption that 
$\E[|\epsilon/\sigma|^{8+\kappa}]\le K$ and the fact that the marginals 
of $\tilde{z}\in \mathcal F_{d_n,20}(D,M)$ have bounded 20th moment, 
together with Rosenthal's inequality establish the boundedness of 
$\E[|\tilde{\eps}_i/s|^{8+\kappa}]$, which is sufficient for (A2). 
Using Lemma~\ref{Fstat} and noting that $a_n^{-1/2} = n^k(1+o(1))$ for some
$k\in \mathbb R$, it follows that \eqref{eq:FstarConv} continues
to hold with $\hat{F}_n(X,Y)$ replacing $\hat{F}_n(X,Y^*)$.

Now standard arguments conclude the proof:
First, note that an appropriately scaled and centered $F$-distributed 
random variable $\mathcal F_{p_n,n-p_n-1,n\Delta}$ with $p_n$ and $n-p_n-1$ 
degrees of freedom and non-centrality parameter $n\Delta$ is 
also asymptotically normal, i.e., 
\begin{align}\label{eq:Fconv}
	a_n^{-1/2}(\mathcal F_{p_n,n-p_n-1,n\Delta} - 1) - 
	\sqrt{n}\Delta b_n\;\xrightarrow[n\to\infty]{w}\; N(0,1),
\end{align}
because $p_n/n\to \rho\in(0,1)$ implies that $p_n\to\infty$.
Hence, we have
\begin{align*}
&\sup_{t\in\R} \left|\P\left(\hat{F}_n(X,Y) \le t\right)
-
\P(\mathcal F_{p_n,n-p_n-1,n\Delta}\leq t) \right|\\
&\quad=
\sup_{t\in\R} \left|
	\P\left(a_n^{-1/2}(\hat{F}_n(X,Y)-1) -\sqrt{n}\Delta b_n 
	\le t \right)\right.\\
&\quad\quad\quad\quad\quad-\left.
	\P\left(a_n^{-1/2}(\mathcal F_{p_n,n-p_n-1,n\Delta}-1) -
	\sqrt{n}\Delta b_n \le t \right)
\right|\\
&\quad\leq
\sup_{t\in\R} \left|
	\P\left(a_n^{-1/2}(\hat{F}_n(Y,X)-1) -
	\sqrt{n}\Delta_\beta b_n \le t \right) - \Phi(t)\right|\\
&\quad\quad\quad+\sup_{t\in\R}
\left|
	\P\left(a_n^{-1/2}(\mathcal F_{p_n,n-p_n-1,n\Delta}-1) -
	\sqrt{n}\Delta_\beta b_n \le t \right) - \Phi(t)
\right|,
\end{align*}
and the last two suprema 
converge to zero in view of Polya's theorem, which establishes 
the $\Xi_n \to 0$ in case $\Xi_n$ equals \eqref{t1.1}.
Finally, it is elementary to verify 
that $\Xi_n$ also converges to zero in case $\Xi_n$ equals 
\eqref{t1.2}: This follows from \eqref{eq:Fconv} with $\hat{F}_n(X,Y)$
replacing $\hat{F}_n(X,Y^*)$, because
the quantiles of the central $F$-distribution satisfy 
$a_n^{-1/2}(F^{-1}_{p_n,n-p_n-1,0}(\alpha)) \to \Phi^{-1}(\alpha)$.
\end{proof}

\begin{remark}\label{zerovariance}\normalfont
Inspection of the proof reveals that the assumption that $\sigma^2$ is positive
is used only to guarantee that  $\Var[e\|x]>0$ almost surely (and hence
also $s^2 = \Var[e] > 0$).
If this assumption is dropped, we thus see that 
$\Xi_n$ (defined in Theorem~\ref{t1}) converges to zero along sequences
of parameters as used in the proof of Theorem~\ref{t1},
provided that $\Var[\theta'z\|x] > 0$ almost surely for each $n$
(as then $\Var[e\|x] = \Var[y\|x]>0$ a.s.).
\end{remark}

\end{appendix}

{
\bibliographystyle{plainnat}
\bibliography{lit}{}
}

\end{document}